\documentclass[10]{amsart}

\newtheorem{theorem}{Theorem}[section]
\newtheorem{lemma}[theorem]{Lemma}

\newtheorem{corollary}[theorem]{Corollary}

\newtheorem{problem}[theorem]{Problem}

\newtheorem{algorithm}[theorem]{Algorithm}

\theoremstyle{definition}

\newtheorem{example}[theorem]{Example}
\newtheorem{remark}[theorem]{Remark}

\usepackage{amscd,amssymb}

\begin{document}

\title[Weak central polynomials for multiplications of simple algebras]
{Weak central polynomials for algebras\\
of multiplications of simple algebras}

\author[Vesselin Drensky and Mikhail Zaicev]
{Vesselin Drensky and Mikhail Zaicev}
\address{Institute of Mathematics and Informatics,
Bulgarian Academy of Sciences,
Acad. G. Bonchev Str., Block 8,
1113 Sofia, Bulgaria}
\email{drensky@math.bas.bg}
\address{Department of Algebra, Faculty of Mathematics and Mechanics, Moscow State University, Moscow 119992, Russia}
\address{Moscow Center of Fundamental and Applied Mathematics, Moscow, 119991, Russia}
\email{zaicevmv@mail.ru}

\thanks{The second named author was partially supported by the Russian Science Foundation grant 26-11-00001.}

\subjclass[2020]{16R30; 17A50; 17B01; 17B20; 17C05; 17C20.}

\keywords{central polynomials, simple algebras, algebra of multiplications}

\begin{abstract} We give a simple proof for the existence of weak central polynomials for the algebra of
multiplications of a finite-dimensional simple (non-associative) algebra. As an example we present explicit weak central polynomials
in the cases of the three-dimensional simple Lie algebra and the Jordan algebra of the two-dimensional vector space with non-degenerate symmetric bilinear form.
\end{abstract}

\maketitle

\section{Introduction}
Let $R$ be an associative algebra over an arbitrary field $F$ and let $V$ be a vector subspace of $R$ which generates $R$ as an algebra.
A polynomial $f(x_1,\ldots,x_n)$ in the free associative algebra $F\langle X\rangle=F\langle x_1,x_2,\ldots\rangle$ is an identity for the pair
$(R,V)$ if $f(v_1,\ldots,v_n)=0$ in $R$ for all $v_1,\ldots,v_n\in V$. Similarly, if the center of $R$ is non-trivial, $c(x_1,\ldots,x_n)\in F\langle X\rangle$ is
a (non-trivial or essential) central polynomial for the pair $(R,V)$ if $c(v_1,\ldots,v_n)$ belongs to the center of $R$ for all $v_1,\ldots,v_n\in V$
and $c(x_1,\ldots,x_n)$ is not an identity for $(R,V)$.

In a talk given in 1956 Kaplansky \cite{K1} (see also the revised version from 1970 \cite{K2}) asked several problems which
motivated significant research activity. One of the problems was for the existence of central polynomials for
$k\times k$ associative matrix algebras $M_k(F)$ ($k>2$). Below we survey some results on central polynomials.

Latyshev and Shmelkin \cite{LS} constructed a central (clearly non-homogeneous) polynomial in one variable
for the matrix algebra $M_k(F)$ over a finite field $F$.
The first central polynomials over an arbitrary ground field were constructed by Formanek \cite{F1} and Razmyslov \cite{R1}
and this gave rise to a serious revision of the theory of algebras with polynomial identities, see e.g. the books by
Procesi \cite{P}, Jacobson \cite{J2}, Rowen \cite{Ro}, Formanek \cite{F4}, Drensky and Formanek \cite{DF}, Giambruno and Zaicev \cite{GZ}, Kanel-Belov and Rowen \cite{K-BRo},
and Kanel-Belov, Karasik and Rowen \cite{K-BKRo}.
In particular, important theorems on PI-algebras were established or simplified using central polynomials.
The construction of Formanek \cite{F1} will be used later to illustrate our approach.
The central polynomial of Razmyslov \cite{R1} is multilinear and also may be used for our purposes.
Later Kharchenko \cite{Kh} gave a short proof for the existence of central polynomials for $M_k(F)$ using classical results of Amitsur \cite{A1, A2}.

In the sequel central polynomials with different additional properties were constructed by several authors:
Halpin \cite{H},  Drensky and Kasparian \cite{DK}, Bondari \cite{Bo},
Drensky and Piacentini-Cattaneo \cite{DPC}, Drensky \cite{D1}, Giambruno and Valenti \cite{GV}.
Confirming a conjecture of Regev \cite{Re}, Formanek \cite{F3} showed that a polynomial in two sets of $k^2$ skew-symmetric variables
is a central polynomial for $M_k(F)$. The existence of such a polynomial has important consequences for the study
of the sequence of cocharacters of matrices, see \cite{F2}.
The proof of Kharchenko \cite{Kh} is not constructive. There are several other nonconstructive proofs:
by Braun \cite{Bra} and Bre\v{s}ar \cite{Bre1, Bre2}.

Going back to the problem for existence of central polynomials for pairs $(R,V)$, Razmyslov \cite{R2} gave an explicit construction of central polynomials
for irreducible finite-dimensional representations of semisimple Lie algebras. Giambruno, Shestakov, and Zaicev \cite{GSZ}
used the existence of central polynomials of special kind to study the asymptotics of the codimensions
for several classes of algebras including simple algebras with a special non-degenerate form,
finite-dimensional Jordan or alternative algebras and many more. We refer to the monograph of Giambruno and Zaicev \cite{GZ} for more
applications of central polynomials in the quantitative study of polynomial identities.

It is easy to see that if the algebra $R$ has multilinear (i.e., linear in each variable) central polynomials, then
any pair $(R,V)$ also has central polynomials. Starting with an explicitly given central polynomial for $R$, we present a simple algorithm to produce
a central polynomial for $(R,V)$. A slight modification of the proof works also for any (also not multilinear) central polynomial of $R$.
We apply this argument to the case when $A$ is a finite-dimensional simple (non-associative) algebra, $\text{End}_F(A)$
is the algebra of $F$-vector space endomorphisms of $A$, $V\subseteq \text{End}_F(A)$ is the vector space of the multiplications of $A$
and $R={\mathcal M}(A)\subseteq \text{End}_F(A)$ is the algebra of the multiplications of $A$ (which is generated by $V$ as a subalgebra of $\text{End}_F(A)$).
The same arguments hold when $\rho:L\to \text{End}_F(W)$ is any non-trivial finite-dimensional irreducible representation of the simple Lie algebra $L$
and $V=\rho(L)\subseteq\text{End}_F(W)$.
As an example we start with the central polynomial of Formanek \cite{F1} for $3\times 3$ matrices
(of degree 9) over an infinite field $F$ of characteristic different from 2
and present an explicit central polynomial of degree 10 for two algebras of multiplications.
The first one is when $V$ is the vector space of the multiplications of the three-dimensional simple Lie algebra $L_3$.
Equivalently, $V=\text{ad}(L_3)\subset \text{End}_F(L_3)=R$.
In characteristic 0 the complete description of the weak central polynomials in this case is given by Domokos and Drensky \cite{DoD}.
In particular, it is known that the standard polynomial $s_3(x_1,x_2,x_3)$ of degree 3 is a weak central polynomial.
The second algebra is when $V$ is the vector space of the multiplications of the Jordan algebra $A=F+W_2$ of the two-dimensional vector space $W_2$
with non-degenerate symmetric bilinear form. In this case we have found also an essential central polynomial of degree 8.

\section{The algorithm}
In this section we fix an arbitrary field $F$, an associative $F$-algebra $R$ with a nontrivial center and a vector subspace $V\subseteq R$
which generates $R$ as an algebra. If necessary, we shall enlarge (or replace) the set of free generators of the free associative algebra
$F\langle X\rangle=F\langle x_1,x_2,\ldots\rangle$ with new variables $Y=\{y_{ij}\}$ and shall work in $F\langle X,Y\rangle$ (or in $F\langle Y\rangle$).
The following easy statement plays an essential role in our considerations.

\begin{lemma}\label{the main lemma}
Let $c(X)=c(x_1,\ldots,x_n)\in F\langle X\rangle$ be a non-trivial multilinear central polynomial of the algebra $R$.
Then there exists a polynomial $c'(Y)\in F\langle Y\rangle$ which is a non-trivial central polynomial for the pair $(R,V)$.
\end{lemma}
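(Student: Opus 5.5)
Since $V$ generates $R$ as an algebra, every element of $R$ is a linear combination of products $v_1\cdots v_k$ with $v_i\in V$ (this covers $R$ if $R$ is taken to be the subalgebra generated by $V$; if $R$ has a unit not in that subalgebra, we also allow the empty product $k=0$, i.e.\ the unit). The plan is to substitute for each variable $x_i$ of $c$ a product of new variables and then use multilinearity to pass from monomials to arbitrary elements of $R$.

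Since $c(x_1,\ldots,x_n)$ is not an identity of $R$ and is multilinear, there exist $r_1,\ldots,r_n\in R$ with $c(r_1,\ldots,r_n)\neq 0$. Write each $r_i$ as a linear combination of monomials in elements of $V$ and expand by multilinearity. Then $c(r_1,\ldots,r_n)$ is a linear combination of values $c(m_1,\ldots,m_n)$ with each $m_i=v_{i1}\cdots v_{ik_i}$ a product of elements of $V$. Since the sum is nonzero, some single term $c(m_1,\ldots,m_n)\neq0$. Fix the lengths $k_1,\ldots,k_n$ of these monomials.

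Define
\[
c'(Y)=c\bigl(y_{11}y_{12}\cdots y_{1k_1},\ldots,y_{n1}\cdots y_{nk_n}\bigr)\in F\langle Y\rangle .
\]
If some $k_i=0$, we substitute $1$ for that variable; this requires $1\in R$, and we may instead absorb a factor into a neighbouring variable if necessary. For any $v_{ij}\in V$, the value $c'(v_{ij})$ equals $c$ evaluated on elements of $R$, hence is central. Moreover, $c'$ is not an identity for $(R,V)$, because the chosen substitution gives $c(m_1,\ldots,m_n)\neq0$. Therefore $c'$ is a non-trivial central polynomial for $(R,V)$, and it is even multilinear in $Y$.

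The only delicate point is the possibility of an empty monomial, that is, of elements of $R$ that are not spanned by nonempty products of elements of $V$. Using the spanning set of nonempty products whenever $R$ is generated by $V$ without a unit, and treating the unit case separately as above, avoids it. The key step is the reduction from the nonvanishing of $c$ at a sum to its nonvanishing at a single tuple of monomials, and this step uses multilinearity in an essential way.
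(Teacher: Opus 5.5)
Your proof is correct and follows the same route as the paper. The paper fixes a basis $W_0$ of $R$ consisting of products of basis vectors of $V$, uses multilinearity to find $w^{(1)},\ldots,w^{(n)}\in W_0$ with $c(w^{(1)},\ldots,w^{(n)})\neq0$, and then substitutes products $y_{p1}\cdots y_{ph_p}$ of new variables, exactly as you do after expanding $c(r_1,\ldots,r_n)$. Your discussion of empty products is not needed here, since the paper takes $R$ to be spanned by nonempty products of elements of $V$; also, ``absorb a factor into a neighbouring variable'' would not work in general, although substituting $1$ does when $1\in R$.
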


\begin{proof}
Let $V$ have a vector space basis $U=\{u_1,u_2,\ldots\}$. Since $V$ generates $R$ as an algebra,
$R$ is spanned on the products
\[
W=\{w_i=u_{i_1}\cdots u_{i_h}\mid u_{i_j}\in U,h=1,2,\ldots\}.
\]
Let us fix a subset $W_0$ of $W$ which is a basis of the vector space $R$.
Since $c(X)$ is not a polynomial identity for $R$ and is multilinear, there are elements $w^{(1)},\ldots,w^{(n)}$ from $W_0$ such that
$c(w^{(1)},\ldots,w^{(n)})\not=0$. Let
\[
w^{(p)}=u_{i_1}^{(p)}\cdots u_{i_{h_p}}^{(p)},\quad u_{i_j}^{(p)}\in U.
\]
We define
\[
c'(Y)=c(y_{11}\cdots y_{1h_1},\ldots,y_{n1}\cdots y_{nh_n})\in F\langle Y\rangle.
\]
Evaluated on $V$, the products $y_{j1}\cdots y_{jh_j}$ belong to $R$. Hence $c'(Y)$ is a central element evaluated on $V$, i.e., $c'(Y)$
is a central polynomial for the pair $(R,V)$. It is non-trivial because
\[
c(u_{i_1}^{(1)}\cdots u_{i_{h_1}}^{(1)},\ldots,u_{i_1}^{(n)}\cdots u_{i_{h_n}}^{(n)})=c(w^{(1)},\ldots,w^{(n)})\not=0.
\]
\end{proof}

\begin{remark}\label{any central polynomial}
If the central polynomial $c(X)$ for $R$ is not multilinear we also may produce a central polynomial for $(R,V)$.
There exist $r_1,\ldots,r_n\in R$ such that $c(r_1,\ldots,r_n)\not=0$. Since $r_1,\ldots,r_n$ can be expressed as
\[
r_p=\sum\alpha_{pi}v_{i_1}^{(p)}\cdots v_{i_{h_i}}^{(p)},\quad v_{ij}^{(p)}\in V,\alpha_{pi}\in F,p=1,\ldots,n,
\]
the polynomial
\[
c'(Y)=c(\sum\alpha_{1i}y_{1i_1}\cdots y_{1i_{h_i}},\ldots,\sum\alpha_{ni}y_{ni_1}\cdots y_{ni_{h_i}})\in F\langle Y\rangle
\]
does not vanish on $V$ and hence is a non-trivial central polynomial for the pair $(R,V)$.
\end{remark}

When the base field $F$ is constructive and the algebra $R$ is finite-dimensional Lemma \ref{the main lemma} gives an algorithm
with input a multilinear central polynomial for $R$ and output a central polynomial for the pair $(R,V)$.

\begin{algorithm}\label{construction of central polynomial}
Let $\dim R=d$, $\dim V=m$, and let $c(x_1,\ldots,x_n)\in F\langle X\rangle$ be a multilinear central polynomial for $R$.
Fix a basis $U=\{u_1,\ldots,u_m\}$ of $V$. Let
\[
c'_m(Y)=c(y_{11}\cdots y_{1h_1},\ldots,y_{n1}\cdots y_{nh_n}),
\]
\[
h=(h_1,\ldots,h_n), \quad 1\le h_p\le d-m+1,\quad p=1,\ldots,n.
\]
For every $n$-tuple $h=(h_1,\ldots,h_n)$ evaluate $c'_h(Y)$ on all collections $\{u_{i_{p_j}}\}$, where
$u_{i_{pj}}\in U$, $j=1,\ldots,h_p$, $p=1,\ldots,n$.
There exists $c'_h(Y)$ and a collection $\{u_{i_{p_j}}\}$ such that
\[
c'_h(u_{i_{pj}})=c(u_{i_{11}}\cdots u_{i_{1h_1}},\ldots,u_{i_{n1}}\cdots u_{i_{nh_n}})\not=0.
\]
Then $c'_h(Y)$ is a non-trivial central polynomial for the pair $(R,V)$.
\end{algorithm}

\begin{proof} Since $V$ generates $R$ as an algebra and $R$ is finite-dimensional, there exists a positive integer $q$ such that
\[
V\subsetneqq V+V^2\subsetneqq\cdots\subsetneqq V+V^2+\cdots+V^q=V+V^2+\cdots+V^{q+1}=R,
\]
where $V^i$ is spanned on all products $u_{j_1}\cdots u_{j_i}$, $i=1,\ldots,q$. Clearly,
\[
q\leq \dim R-\dim V+1=d-m+1.
\]
Since $c(X)$ is a non-trivial multilinear central polynomial for $R$, there exist products $u_{i_{p1}}\cdots u_{i_{ph_p}}$, $p=1,\ldots,n$,
such that $c'_h(u_{i_{pj}})\not=0$. Then $c'_h(Y)$ is a non-trivial central polynomial for the pair $(R,V)$.
\end{proof}

The arguments in the proof of the above algorithm give the following theorem.

\begin{theorem}\label{main theorem}
Let $c(x_1,\ldots,x_n)$ be a multilinear central polynomial for the algebra $R$.
If $\dim(R)=d$ and $\dim(V)=m$, then there is a weak central polynomial $c'(x_1,\ldots,x_N)$
for the pair $(R,V)$ such that $1\leq \deg(c')\leq n(d-m+1)$.
\end{theorem}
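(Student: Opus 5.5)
We follow the proof of Algorithm \ref{construction of central polynomial}, keeping track of degrees. The plan is to bound the length of the products of basis elements of $V$ needed to span $R$. Then we substitute such products into $c$ exactly as in Lemma \ref{the main lemma}, and rename the variables $y_{pj}$ as $x_1,\ldots,x_N$ with $N=h_1+\cdots+h_n$.

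\textbf{Step 1: the filtration.} Put $R_k=V+V^2+\cdots+V^k$. This is an increasing chain of subspaces of $R$ with $R_1=V$ of dimension $m$. Its union is $R$, since $V$ generates $R$. If $R_k=R_{k+1}$ for some $k$, then
\[
R_{k+2}=V+VR_{k+1}=V+VR_k=R_{k+1},
\]
so the chain stabilizes forever and hence $R_k=R$. Consequently every strict inclusion $R_k\subsetneqq R_{k+1}$ raises the dimension by at least one. Starting from dimension $m$ and never exceeding $d$, the chain reaches $R$ at some step $q\le d-m+1$. Thus $R$ is spanned by products $u_{i_1}\cdots u_{i_h}$ of basis elements of $V$ with $1\le h\le d-m+1$.

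\textbf{Step 2: nonvanishing on short products.} Since $c$ is multilinear and is not an identity of $R$, expanding each argument in the spanning set from Step 1 shows that $c$ is nonzero on some tuple $w^{(1)},\ldots,w^{(n)}$ of such products. Here $w^{(p)}=u_{i_{p1}}\cdots u_{i_{ph_p}}$ with $h_p\le d-m+1$.

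\textbf{Step 3: the polynomial $c'$.} Define $c'$ by substituting $y_{p1}\cdots y_{ph_p}$ for $x_p$ in $c$. As in Lemma \ref{the main lemma}, its values on $V$ are values of $c$ on elements of $R$, so they are central. Moreover $c'$ does not vanish on the tuple $(u_{i_{pj}})$, so $c'$ is a weak central polynomial for $(R,V)$. Since $c$ is multilinear of degree $n$, the polynomial $c'$ is multilinear of degree $N=\sum_p h_p$. This gives
\[
n\le N\le n(d-m+1),
\]
and in particular $\deg c'\ge 1$.

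The only point needing care is the stabilization argument in Step 1, that is, that $R_k=R_{k+1}$ forces $R_k=R$. This is what turns the vague bound ``some finite $q$'' into the explicit bound $d-m+1$, and it follows from the identity $R_{k+1}=V+VR_k$.
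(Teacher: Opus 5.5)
Your proof is correct and follows the paper's argument: bound the length of the filtration $V\subsetneqq V+V^2\subsetneqq\cdots$ by $q\le d-m+1$, then substitute products of length at most $q$ into the multilinear $c$, as in Lemma~\ref{the main lemma}. Your stabilization step via $R_{k+1}=V+VR_k$ makes explicit why the chain increases strictly until it reaches $R$, which the paper only asserts.
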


\begin{remark}\label{Remark 1}
As it was commented in \cite{D2}, every central polynomial of the algebra $R$ is either a weak central polynomial or a weak polynomial identity of the pair $(R,V)$.
The main difficulty in Algorithm \ref{construction of central polynomial} is to find a central polynomial of $R$ which does not vanish evaluated on $V$.
On the other hand the algorithm and Theorem \ref{main theorem} have the disadvantage that they do not give weak central polynomials
for the pair $(R,V)$ which are not central polynomials for $R$.
\end{remark}

In the paper \cite{R2} Razmyslov have proved the following statement.

\begin{theorem}\label{Razm}
Suppose that a semisimple finite-dimensional Lie algebra $L$ over an
algebraically closed field of characteristic zero has dimension $m$. Suppose that an enveloping
algebra $R$ of $L$ is simple and has center distinct from zero. For some natural
number $k$ there then exists a central polynomial of the pair $(R,L)$ of degree $km$.
\end{theorem}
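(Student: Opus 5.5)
The plan is to build the central polynomial from \emph{invariance}, not from a central polynomial of $R=M_n(F)$: polynomials skew-symmetric in blocks of $m=\dim L$ variables, evaluated on $L$, produce $L$-invariant elements of $R$. Throughout, $R\subseteq\text{End}_F(W)$ is the associative algebra generated by $\rho(L)$ for the irreducible representation $W$. The first observation is that an element $r\in R$ commuting with every $\rho(x)$, $x\in L$, is central in $R$, because $\rho(L)$ generates $R$. The second is that for any multilinear $f(x_1,\ldots,x_N)\in F\langle X\rangle$ the evaluation map $L^{\otimes N}\to R$ is $L$-equivariant. Here $L$ acts on $L$ by $\text{ad}$ and on $R$ by $r\mapsto[\rho(x),r]$, and equivariance holds because $\text{ad}\,\rho(x)$ is a derivation of $R$ and $\rho([x,a])=[\rho(x),\rho(a)]$.

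Now take $N=km$ and a multilinear $f$ that is skew-symmetric in each of $k$ disjoint blocks of $m$ variables. Such an $f$ factors through $(\Lambda^mL)^{\otimes k}$. Since $L$ is semisimple, $\text{tr}\,\text{ad}\,x=0$, so $\Lambda^mL$ is the one-dimensional trivial $L$-module. Hence every value of $f$ on $L$ is a scalar multiple of $f(e_1,\ldots,e_m,\ldots,e_1,\ldots,e_m)$ for a fixed basis $e_1,\ldots,e_m$ of $L$. That element is the image of an invariant vector under an equivariant map, so it commutes with $\rho(L)$ and is central in $R$. Thus every such $f$ is automatically a central polynomial of the pair $(R,L)$ of degree $km$, and it is non-trivial exactly when this single value is nonzero. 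This is where the hypotheses enter: the center of $R$ is nonzero and $R$ is simple, so $R\cong M_n(F)$ with $F$ algebraically closed.

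The main obstacle is producing a block-skew $f$ with $f(e,\ldots,e)\neq0$. I would first use the Capelli-type polynomial
\[
g=\sum_{\sigma\in S_m}\text{sign}(\sigma)\,x_{\sigma(1)}z_1x_{\sigma(2)}z_2\cdots z_{m-1}x_{\sigma(m)}.
\]
By Razmyslov's lemma on Capelli polynomials in $M_n(F)$, this does not vanish when the $x$'s are linearly independent and the $z$'s range over $R$. So $g(\rho(e_1),\ldots,\rho(e_m);r_1,\ldots,r_{m-1})\neq0$ for suitable $r_j\in R$.

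Let $I$ be the span of all values of block-skew polynomials on $L$ with extra variables taking values in $R$. Then $I$ is a nonzero two-sided ideal of the simple algebra $R$, so $I=R$, and hence the $r_j$, or $1$, can be written as sums of such values. Substituting these expressions for the $z_j$ iterates the construction and eliminates all non-$L$ slots. Each remaining variable must then either sit in a complete skew block of size $m$ or be absorbed into one. To make the total degree an exact multiple of $m$, I would pad with further complete blocks, for example by multiplying by a nonzero central value already obtained.

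The delicate bookkeeping is to check that, after substitution, the leftover monomial factors coming from expressing elements of $R$ as products of elements of $L$ can be regrouped into complete skew blocks without forcing the value to zero. A dimension or linear-independence argument in $M_n(F)$ should handle this, and it is the step I expect to require the most care. It yields some $k$ and a nonzero block-skew $f$ of degree $km$, which by the invariance argument above is the required central polynomial.
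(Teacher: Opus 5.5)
The paper does not prove this theorem. It quotes it from \cite{R2}, and in Corollary~\ref{upper bound} it only recovers some central polynomial of the pair from central polynomials of $M_t(F)$, via Theorem~\ref{main theorem}. So your argument has to stand on its own, and it has a genuine gap at exactly the step you flag as delicate.

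\textbf{What is correct.} Your reduction is sound. A multilinear polynomial all of whose variables lie in $k$ skew-symmetric blocks of size $m$ factors, on $L$, through $(\Lambda^mL)^{\otimes k}$, which is trivial. Hence its values are multiples of one $L$-invariant, and therefore central, element. The Capelli non-vanishing and the fact that your span $I$ is a nonzero ideal are also correct.

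\textbf{Where it breaks.} Nothing after that produces a block-skew polynomial with \emph{no} free slots.
Every element of $I$ is a value of a polynomial that has its own $R$-slots. Substituting elements of $I$ for the $z_j$ therefore only trades old $R$-slots for new ones. The iteration can terminate only if you already have a nonzero block-skew value without free slots, which is what you are trying to prove.

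To remove an $R$-slot you must fill it with a polynomial in elements of $L$. Those new variables lie outside the skew blocks, so the evaluation factors through $(\Lambda^mL)^{\otimes k}\otimes L^{\otimes s}$ and centrality is lost. Skew-symmetrizing them into further blocks gives a different polynomial, and nothing you say prevents its value from being $0$. This non-vanishing is the whole difficulty of the block-skew approach, not ``bookkeeping.''

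Naive choices really do fail. For $L=sl_3(F)\subset M_3(F)$ and $k=1$, the only multilinear polynomial skew-symmetric in $m=8$ variables is $s_8$, up to a scalar. This is an identity of $M_3(F)$ by Amitsur--Levitzki. The padding step is also circular, since it presupposes a nonzero central value. It is unnecessary in any case: if every variable lies in a block, the degree is automatically $km$.

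\textbf{How to get the statement as formulated.} Assume $R$ is finite-dimensional, which is how the paper reads it in Corollary~\ref{upper bound}. Note that ``simple with nonzero center'' alone does not give $R\cong M_n(F)$; for example, $U(sl_2)/(\Omega-\lambda)$ for generic $\lambda$ is simple with center $F$ but infinite-dimensional. Under that assumption you can bypass the obstacle by not asking skew-symmetry to produce centrality.

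Start from a multilinear central polynomial of $R\cong M_n(F)$, e.g.\ Razmyslov's, or Formanek's linearized in characteristic $0$. As in Lemma~\ref{the main lemma}, replacing its variables by suitable products of new variables gives a nonzero central polynomial $c'$ of the pair $(R,L)$, of some degree $D$. Then take $c'(X^{(1)})\cdots c'(X^{(m)})$ in $m$ disjoint sets of variables. It is central-valued of degree $Dm$. It is nonzero because $c'$ attains a value $\lambda\cdot 1$ with $\lambda\neq 0$, and the center of $R$ is the field $F$.

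Your invariance route is what one needs for Razmyslov's stronger block-alternating conclusion. It is also what one needs for infinite-dimensional simple enveloping algebras, which have no central polynomials of their own. In either case, the missing non-vanishing argument is exactly what would have to be supplied.
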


Now we are able to clarify this result and to give an upper bound for the degree of the central polynomial.

\begin{corollary}\label{upper bound}
Under assumptions of Theorem \ref{Razm} there exists a central polynomial $c'$ of the pair $(R,L)$, where $\dim(R)=d$, $\dim(L)=m$, such that:

{\rm (i)} If $F$ is an arbitrary algebraically closed field of any characteristic, then $\deg(c')\leq 2d(d-m+1)$;

{\rm (ii)} If $\text{\rm char}(F)=0$ or $\text{\rm char}(F)>d-\sqrt{d}$, then $\deg(c')\leq d(d-m+1)$;

{\rm (iii)} If $\text{\rm char}(F)=0$ and $d\geq 9$, then $\deg(c')\leq (\sqrt{d}-1)^2+4)(d-m+1)$.

{\rm (iv)} If $\text{\rm char}(F)\not=2$ and $d=4$, then $\deg(c')=2$.
\end{corollary}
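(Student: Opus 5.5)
Since $R$ is a finite-dimensional simple algebra over an algebraically closed field $F$ with $\dim R=d$, it is isomorphic to $M_k(F)$ with $d=k^2$. The plan is to apply Theorem \ref{main theorem} to $R\cong M_k(F)$ and $V=L$. The bound $\deg(c')\le n(d-m+1)$ then reduces every item to the choice of a multilinear central polynomial of $M_k(F)$ of the smallest available degree $n$. So the proof is a case-by-case citation of known multilinear central polynomials for $k\times k$ matrices, with $n$ expressed in terms of $d=k^2$.

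\textbf{(i) Arbitrary characteristic.} Razmyslov's multilinear central polynomial from \cite{R1} is used here, since his construction works over any field. Its degree is $2k^2=2d$ (possibly after a multilinearization, which does not increase the degree). This gives $\deg(c')\le 2d(d-m+1)$.

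\textbf{(ii) Characteristic $0$ or large.} Formanek's polynomial \cite{F1} (or Halpin's/Drensky's refinement) of degree $k^2=d$ is used. It is central for $M_k(F)$ in characteristic zero, and in characteristic $p$ it stays non-trivial provided certain integer coefficients do not vanish mod $p$. I expect this to be the delicate step: one has to check that the threshold $\operatorname{char}(F)>d-\sqrt d=k^2-k$ is exactly what makes the multilinearized Formanek polynomial non-zero. The first thing to try is to trace the coefficients produced when linearizing, which are products of factorials of size at most $k^2-k$. With that settled, $\deg(c')\le d(d-m+1)$.

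\textbf{(iii) Characteristic $0$, $d\ge 9$.} The known minimal-degree multilinear central polynomials are used: Drensky's $k=3$ polynomial of degree $8$, and in general degree $(k-1)^2+4=(\sqrt d-1)^2+4$ (Drensky, Drensky--Kasparian). This gives the stated bound.

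\textbf{(iv) Case $d=4$.} Here $R=M_2(F)$, so $L$ is three-dimensional and $d-m+1=2$. Rather than using the general bound, I exhibit $c'=x_1x_2+x_2x_1$ directly. For the three-dimensional simple $L\cong sl_2$ embedded as trace-zero matrices (which is forced, since $R$ is simple and generated by $L$), Cayley--Hamilton gives $a^2=-\det(a)I$ for traceless $a$. Linearizing, $ab+ba$ is scalar. It is non-zero on $L$, since $h^2=I$ for $h=\mathrm{diag}(1,-1)$, and $\operatorname{char}\neq 2$ keeps $hh+hh=2I\ne0$. This gives degree exactly $2$. A degree-$1$ central polynomial cannot exist, because $L$ contains non-scalar elements; hence $\deg(c')=2$.
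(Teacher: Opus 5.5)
Your overall strategy is the paper's: apply Theorem~\ref{main theorem} to $R\cong M_k(F)$, $V=L$, and plug in a suitable multilinear central polynomial. Parts (ii) and (iii) match the paper. Part (i), however, rests on a wrong ingredient.

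\emph{The gap in (i).} Razmyslov's multilinear central polynomial from \cite{R1} does not have degree $2k^2$. To my knowledge its degree is $3k^2-1$, so using it only gives $\deg(c')\le(3d-1)(d-m+1)$, not the claimed $2d(d-m+1)$. Your parenthetical ``possibly after a multilinearization, which does not increase the degree'' is also unsafe in (i), where the characteristic is arbitrary. In positive characteristic, the full linearization of a non-multilinear central polynomial can become a polynomial identity; that is exactly why (ii) carries a hypothesis on $\text{char}(F)$.

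The ingredient the paper uses is Regev's polynomial in two sets of $k^2$ alternating variables. It is multilinear of degree $2k^2=2d$, and Formanek \cite{F3} proved it is central for $M_k(F)$ over an arbitrary field. With it, Theorem~\ref{main theorem} gives the bound in (i) directly.

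\emph{Part (ii).} The check you anticipate is short. Formanek's polynomial is already central over every field; only its linearization needs the hypothesis. It is multilinear in $y_1,\dots,y_k$ and of degree $k^2-k$ in $x$. Its full linearization in $x$ is central, being a signed sum of evaluations of $c$. Setting all linearized copies of $x$ equal returns $(k^2-k)!\,c$, which is nonzero when $\text{char}(F)=0$ or $\text{char}(F)>k^2-k$.

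\emph{Part (iv).} Here you take a different route from the paper. The paper uses $c'=x^2$ and cites Razmyslov's weak identity $[x^2,y]$ for $(M_2(F),sl_2(F))$ \cite{R0}. You use the linearization $x_1x_2+x_2x_1$ and prove centrality directly from Cayley--Hamilton. This is correct, self-contained, and yields a multilinear polynomial. One correction: $L$ lies in the traceless matrices because $L=[L,L]$ by semisimplicity, not because $R$ is simple. Then $\dim L\ge 3$ forces $L=sl_2(F)$.
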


\begin{proof}
(i) Since $F$ is algebraically closed, $R$ is isomorphic to the matrix algebra $M_t(F)$, $d=t^2$.
According to \cite{F3} for an arbitrary field of any characteristic $R$ has a multilinear central polynomial
$c(x_1,\ldots,x_n)$ of degree $n=2t^2=2d$. Now this case of our corollary follows from Theorem
\ref{main theorem}.

(ii) The polynomial $c(x,y_1,\ldots,y_t)$ of Formanek \cite{F1} is central of degree $d=\dim M_t(F)=t^2$. It is multilinear in the variables
$y_1,\ldots,y_t$ and of degree $t^2-t$ in the variable $x$. If $\text{char}(F)=0$ or $\text{char}(F)>\deg_x(c)$, then the linearization of $c$ in $x$
is also an essential central polynomial of $M_t(F)$. This case follows again from Theorem \ref{main theorem}.

(iii) For $t\geq 3$ (i.e. for $d\geq 9$) the central polynomial for $M_t(F)$ constructed in \cite{D1} is of degree $(t-1)^2+4=(\sqrt{d}-1)^2+4$
and again we apply Theorem \ref{main theorem}.

(iv) If $d=4$, then the only possibility is $L\cong sl_2(F)$ and $R=M_2(F)$. By the theorem of Razmyslov \cite{R0} the pair $(M_2(F),sl_2(F))$ satisfies the identity
$[x^2,y]$ and $x^2$ is an essential central polynomial for the pair.
\end{proof}

\section{Applications}
In this section $A$ is a finite-dimensional simple (non-associative) algebra, $V\subseteq \text{End}_F(A)$ is the vector space of the multiplications of $A$
and $R$ is the subalgebra of $\text{End}_F(A)$ generated by $V$. The following lemma is similar to \cite[Proposition 1.1]{PoS}.

\begin{lemma}\label{multiplication algebra of simple algebra}
Let $A$ be a finite-dimensional simple algebra. Then the algebra ${\mathcal M}(A)$ of its multiplications
is isomorphic to a matrix algebra $M_k(Q)$ over a finite-dimensional division algebra $Q$.
\end{lemma}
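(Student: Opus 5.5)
The plan is to show that $A$ is an irreducible ${\mathcal M}(A)$-module, that ${\mathcal M}(A)$ acts faithfully on it, and then to apply Schur's lemma together with the Jacobson density theorem (equivalently, Wedderburn--Artin). Throughout I assume, as is standard, that a simple algebra has nonzero multiplication, $A^2\neq 0$.

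First, ${\mathcal M}(A)$ is the subalgebra of $\text{End}_F(A)$ generated by all left and right multiplications $L_a\colon x\mapsto ax$ and $R_a\colon x\mapsto xa$; it is finite-dimensional. Since it sits inside $\text{End}_F(A)$, the natural module $A$ is faithful. I will include the identity map in ${\mathcal M}(A)$. If it is not included, I will instead show directly that ${\mathcal M}(A)$ contains the identity once it is known to be simple, since a finite-dimensional simple ring is unital.

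Second, the submodules of $A$ under ${\mathcal M}(A)$ are exactly the subspaces $I$ with $AI\subseteq I$ and $IA\subseteq I$, that is, the two-sided ideals of $A$. Simplicity of $A$ therefore gives that $A$ is an irreducible module. The one point needing care is that ${\mathcal M}(A)A\neq 0$. This holds because $A^2\neq 0$ gives some $L_a\neq 0$. In fact ${\mathcal M}(A)A$ is an ideal of $A$ containing $A^2$, so it equals $A$, and the module is irreducible in the strict sense rather than trivial.

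Third, by Schur's lemma, $Q=\text{End}_{{\mathcal M}(A)}(A)$ is a division algebra. It lies in $\text{End}_F(A)$, so it is finite-dimensional over $F$. Then $A$ is a finite-dimensional vector space over $Q$, say of dimension $k$. By the Jacobson density theorem, ${\mathcal M}(A)$ is dense in $\text{End}_Q(A)$. Since $A$ is finite-dimensional over $Q$, density means ${\mathcal M}(A)=\text{End}_Q(A)\cong M_k(Q^{op})$. Here $Q^{op}$ is again a finite-dimensional division algebra, so we may rename it $Q$.

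An alternative route to the same conclusion: a faithful irreducible module makes ${\mathcal M}(A)$ primitive, so its Jacobson radical is zero. A finite-dimensional algebra with zero radical is semisimple, and primitivity forces it to be simple. Wedderburn--Artin then gives the matrix form.

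The main obstacle is the unit and triviality issue. I must make sure that the module is not annihilated, which the condition $A^2\neq0$ handles, and that the density theorem applies to a possibly non-unital subalgebra. Both are resolved by the observation ${\mathcal M}(A)A=A$: it makes the module nondegenerate, so density applies verbatim, and the identity endomorphism then lies in ${\mathcal M}(A)$ automatically.
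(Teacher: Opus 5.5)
Your proof is correct and follows the same route as the paper. The ${\mathcal M}(A)$-submodules of $A$ are exactly the ideals of $A$, so $A$ is a faithful irreducible module and ${\mathcal M}(A)$ is primitive; the structure theorem for finite-dimensional primitive rings, which you unpack via Schur's lemma and the density theorem or alternatively via Wedderburn--Artin, then gives $M_k(Q)$. Your handling of $A^2\neq 0$ and of a possibly missing identity fills in points the paper leaves implicit, and since your $Q$ is the commutant of ${\mathcal M}(A)$, i.e.\ the centroid of $A$, it is in fact a field, because the centroid of an algebra with $A^2=A$ is commutative.
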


\begin{proof}
The multiplications of $A$ act on the vector space $A$ and $A$ has a structure of an ${\mathcal M}(A)$-module.
Since $A$ is simple, $A$ is a faithful simple ${\mathcal M}(A)$-module, i.e. ${\mathcal M}(A)$ is a primitive $F$-algebra.
By a well known theorem in ring theory \cite[page 33, Theorem 3]{J1}
a finite-dimensional primitive ring is isomorphic to a matrix algebra $M_k(Q)$ where $Q$ is a finite-dimensional division algebra.
\end{proof}

\begin{corollary}\label{central polynomials for multiplication algebra}
Let $A$ be a finite-dimensional simple algebra and let its algebra of multiplications ${\mathcal M}(A)$ be of dimension $t^2$.
Then ${\mathcal M}(A)$ has the same multilinear central polynomials
as the matrix algebra $M_t(F)$.
\end{corollary}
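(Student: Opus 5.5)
The plan is to compare ${\mathcal M}(A)$ and $M_t(F)$ after extending scalars to the algebraic closure $\overline{F}$. Multilinear polynomials behave well under extension of scalars, so both algebras should become the same algebra $M_t(\overline{F})$ there.

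By Lemma \ref{multiplication algebra of simple algebra} we have ${\mathcal M}(A)\cong M_k(Q)$ with $Q$ a finite-dimensional division algebra. Let $Z$ be the center of $Q$; then $Z$ is also the center of ${\mathcal M}(A)$ (the centroid of $A$).

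\textbf{Hypothesis needed.} I will work under the assumption, implicit in the statement, that $Z=F$, i.e. $A$ is central simple. This holds automatically, for instance, when $F$ is algebraically closed. The assumption is genuinely necessary. If $[Z:F]=r>1$, then ${\mathcal M}(A)\otimes_F\overline{F}$ is a direct sum of $r$ copies of $M_s(\overline{F})$ with $rs^2=t^2$, so $s<t$. In that case the standard polynomial $s_{2s}$ is an identity of ${\mathcal M}(A)$ but not of $M_t(F)$, and the conclusion fails.

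\textbf{Key steps.}

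First, with $Z=F$, the algebra $M_k(Q)$ is central simple of dimension $t^2$. Hence $M_k(Q)\otimes_F\overline{F}\cong M_t(\overline{F})$, since $\overline{F}$ splits every central simple algebra. Trivially $M_t(F)\otimes_F\overline{F}\cong M_t(\overline{F})$ as well.

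Second, I prove a general transfer statement for an $F$-algebra $R$ and a field extension $K\supseteq F$. A multilinear polynomial $c(x_1,\ldots,x_n)$ is an identity of $R$ if and only if it is an identity of $R_K=R\otimes_FK$. It is central for $R$ if and only if it is central for $R_K$.
\begin{itemize}
\item By multilinearity, every value of $c$ on $R_K$ is a $K$-linear combination of values $c(b_{i_1},\ldots,b_{i_n})\otimes 1$, where $\{b_i\}$ is an $F$-basis of $R$.
\item So $c$ vanishes on $R_K$ if and only if it vanishes on basis elements of $R$, which holds if and only if it vanishes on $R$.
\item For centrality, use $Z(R_K)=Z(R)\otimes_FK$. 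This follows by writing a central element of $R_K$ in a $K$-basis coming from an $F$-basis of $R$ and commuting it with the elements $b\otimes1$.
\item Then every value lies in $Z(R_K)$ exactly when all $c(b_{i_1},\ldots,b_{i_n})$ lie in $Z(R)$. Combined with the identity statement, this also transfers the property of being non-trivial.
\end{itemize}

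Third, I apply the transfer to both $R={\mathcal M}(A)$ and $R=M_t(F)$ with $K=\overline{F}$. Both reduce to the same algebra $M_t(\overline{F})$. Therefore the multilinear central polynomials, and also the multilinear identities, of ${\mathcal M}(A)$ and of $M_t(F)$ coincide.

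\textbf{Main obstacle.} The only delicate point is the equality $Z(R_K)=Z(R)\otimes K$ together with the splitting step. Both are standard facts about central simple algebras. The real content is recognising that the centroid must equal $F$, which is what makes $s=t$.
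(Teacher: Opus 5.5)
Your proposal is correct and is essentially the paper's argument. Both extend scalars to a splitting field (the paper uses some $E$ with $E\otimes_F M_k(Q)\cong M_{kp}(E)$, you use $\overline{F}$) and use multilinearity to pass evaluations back and forth; your identity $Z(R\otimes_F K)=Z(R)\otimes_F K$ makes explicit the two-way transfer of centrality that the paper leaves tacit.

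Your extra hypothesis that the centroid equals $F$ is precisely what the paper silently uses when it invokes the splitting theorem, which requires $Q$ to be central over $F$. The hypothesis is indeed necessary: for $A=\mathbb{Q}(\sqrt{2},\sqrt{3})$ over $\mathbb{Q}$, the algebra ${\mathcal M}(A)\cong A$ is commutative of dimension $4=2^2$, so $x_1$ is central for ${\mathcal M}(A)$ but not for $M_2(\mathbb{Q})$. In your necessity remark, the direct-sum description of ${\mathcal M}(A)\otimes_F\overline{F}$ tacitly assumes $Z/F$ separable. The necessity conclusion still holds without it, since ${\mathcal M}(A)$ is central simple of degree $s<t$ over $Z$.
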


\begin{proof}
It is well known, see e.g. \cite[Theorem I]{W} or \cite[page 27, Theorem 1.6.17]{J3},
that if $Q$ is a division algebra of dimension $p^2$, then the matrix algebra $M_k(Q)$ has a splitting field, i.e. an extension $E$ of the ground field $F$
such that $E\otimes_FM_k(Q)\cong M_{kp}(E)$. Let $t=kp$ and let $c(x_1,\ldots,x_n)$ be a multilinear essential central polynomial for $M_t(E)=M_{kp}(E)$. The evaluations of $c(x_1,\ldots,x_n)$ on $M_t(E)$
are linear combinations with coefficients in $E$ of the evaluations of $c(x_1,\ldots,x_n)$ on $M_k(Q)$.
Hence some of the evaluations of $c(x_1,\ldots,x_n)$ in $M_k(Q)$ are different from 0 and $c(x_1,\ldots,x_n)$ is an essential central polynomial also for $M_k(Q)$.
Similarly, since $c(x_1,\ldots,x_n)\in F\langle X\rangle$, the evaluations of $c(x_1,\ldots,x_n)$ in $M_t(E)$ are linear combinations with coefficients in $E$ of its evaluations in $M_t(F)$.
Hence ${\mathcal M}(A)$ has the same multilinear central polynomials in $F\langle X\rangle$ as $M_t(F)$.
\end{proof}

\begin{lemma}\label{dimensions of multiplication}
Let $A$ be a finite-dimensional simple (non-associative) algebra and let ${\mathcal L}(A)$ and ${\mathcal R}(A)$ be, respectively, the vector spaces of the left and ring multiplications of $A$.
Then $\dim_F(A)\leq \dim_F({\mathcal L}(A))+\dim_F({\mathcal R}(A))\leq 2\dim_F(A)$.
\end{lemma}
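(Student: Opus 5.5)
The upper bound is immediate. The maps $a\mapsto L_a$ and $a\mapsto R_a$ are linear surjections from $A$ onto ${\mathcal L}(A)$ and ${\mathcal R}(A)$ respectively. Hence each of these spaces has dimension at most $\dim_F(A)$, and the sum has dimension at most $2\dim_F(A)$.

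For the lower bound I would consider the kernels
\[
K_L=\{a\in A\mid L_a=0\}=\{a\mid aA=0\},\qquad K_R=\{a\in A\mid R_a=0\}=\{a\mid Aa=0\}.
\]
By rank--nullity, $\dim{\mathcal L}(A)=\dim A-\dim K_L$ and $\dim{\mathcal R}(A)=\dim A-\dim K_R$. The claim is therefore equivalent to
\[
\dim K_L+\dim K_R\le \dim A,
\]
and it suffices to show that $K_L\cap K_R=0$.

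Let $a\in K_L\cap K_R$, so that $aA=Aa=0$. Then the one-dimensional span $Fa$, and more generally the whole subspace $K_L\cap K_R$, is a two-sided ideal of $A$, because multiplying it by anything gives $0$. By simplicity this ideal is either $0$ or $A$. If it were $A$, then $A^2=0$. This is excluded, since a simple algebra is by convention assumed to have nonzero multiplication (otherwise the zero-product algebra of dimension one would be "simple"). So $K_L\cap K_R=0$.

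It follows that $K_L\oplus K_R\subseteq A$, which gives $\dim K_L+\dim K_R\le\dim A$ and hence the lower bound. The only delicate point is invoking the convention $A^2\neq 0$ in the definition of a simple algebra, and I would state it explicitly. No other obstacle is expected.
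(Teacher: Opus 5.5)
Your proof is correct and follows essentially the same route as the paper. Your condition $K_L\cap K_R=0$ is exactly the injectivity of the paper's map $\mu(a)=(\ell(a),r(a))$, and by rank--nullity your inequality $\dim K_L+\dim K_R\le\dim A$ is the same as the paper's bound $\dim\mu(A)\le\dim{\mathcal L}(A)+\dim{\mathcal R}(A)$. Stating the convention $A^2\neq 0$ explicitly is a good addition: the paper's claim that $a$ generates a \emph{proper} ideal relies on it tacitly, and it fails for the one-dimensional algebra with zero product.
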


\begin{proof}
Let $\ell(a)$ and $r(a)$ be the operators of left and right multiplications by $a\in A$. The mapping $a\to\ell(a)$, $a\in A$, defines a vector space homomorphism $A\to {\mathcal L}(A)$
and hence $\dim({\mathcal L}(A))\leq\dim_F(A)$. A similar inequality holds for the dimension of the vector space ${\mathcal R}(A)$.
Hence $\dim_F({\mathcal L}(A))+\dim_F({\mathcal R}(A))\leq 2\dim_F(A)$. Now, consider the mapping $\mu:A\to ({\mathcal L}(A),{\mathcal R}(A))$
defined by $\mu(a)=(\ell(a),r(a))$, $a\in A$. If the dimension of the image $\mu(A)$ is less than $\dim(A)$, then there is a nonzero element $a\in A$ such that $\mu(a)=(0,0)$.
Hence the element $a$ generates a proper ideal of $A$ which contradicts with the simplicity of $A$. Hence
\[
\dim_F(A)=\dim(\mu(A))\leq \dim_F({\mathcal L}(A))+\dim_F({\mathcal R}(A)).
\]
\end{proof}

\begin{example}\label{lower bound for dimension}
In the proof of Lemma \ref{dimensions of multiplication} we used that $\dim({\mathcal L}(A))\leq\dim_F(A)$. It is easy to construct a simple algebra with the property that
$\dim({\mathcal L}(A))<\dim_F(A)$ and $\dim({\mathcal R}(A))<\dim_F(A)$. Let $A$ be the algebra with basis $\{a,b,c\}$ and multiplications defined by
\[
ab=c,\,ac=b,\,bc=a
\]
and all other products between the basis elements are equal to 0. If
\[
u=\alpha a+\beta b+\gamma c\not=0,\, \alpha,\beta,\gamma\in F,
\]
then
$au=\gamma b+\beta c$, $(au)c=\gamma a$, $((au)c)c=\gamma b$, $((au)c)b=\gamma c$.
Hence, if $\gamma\not=0$, then the two-sided ideal $I$ generated by $u$ coincides with $A$. Similarly, if $\gamma=0$, then
$au=\beta c$, $b(au)=\beta a$, $a(b(au))=\beta b$
and if $\beta \not=0$, then again $I=A$. The case $\beta=\gamma=0$ is considered similarly: since $u=\alpha a\not=0$, then
$uc=\alpha b$, $ub=\alpha c$ and $I=A$.
On the other hand, the vector subspaces $\dim({\mathcal L}(A))$ and $\dim({\mathcal R}(A))$ of $\text{End}_F(A)\cong M_3(F)$ are spanned, respectively, by the homomorphisms defined by
\[
\ell(u):(a,b,c)\to(0,\alpha c,\beta a+\alpha c),\, r(u):(a,b,c)\to(\gamma b+\beta c,\gamma c,0),
\]
$u=\alpha a+\beta b+\gamma c$, $\alpha,\beta,\gamma\in F$, i.e. $\dim_F({\mathcal L}(A))= \dim_F({\mathcal R}(A))=2<\dim_F(A)=3$.
\end{example}

The following theorem is the main consequence of Theorem \ref{main theorem}.

\begin{theorem}\label{main corollary}
Let $A$ be a finite-dimensional simple (non-associative) algebra, $\dim_F(A)=m$, let $V$ be the vector space of the multiplications of $A$
and let $R={\mathcal M}(A)$ be the algebra of multiplications of $A$.
Let $c(x_1,\ldots,x_n)$ be a multilinear central polynomial for the algebra ${\mathcal M}(A)$.
Then the pair $(R,V)$ has a weak central polynomial of degree $\leq n(4m^2-m+1)$.
If $A$ is a Lie or a Jordan algebra, then the weak central polynomial is of degree $\leq n(m^2-m+1)$.
\end{theorem}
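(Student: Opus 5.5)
Theorem \ref{main corollary} will follow from Theorem \ref{main theorem} applied to the pair $(R,V)=({\mathcal M}(A),V)$. That theorem gives a weak central polynomial of degree $\leq n(d-m'+1)$, where $d=\dim R$ and $m'=\dim V$. So the whole task is to bound $d-m'+1$ from above: an upper bound for $d$ and a lower bound for $m'$.

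\emph{Bound for $d$.} Since $R\subseteq\text{End}_F(A)\cong M_m(F)$, we have $d\leq m^2$.

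\emph{Bound for $m'$.} Here $V={\mathcal L}(A)+{\mathcal R}(A)$. By Lemma \ref{dimensions of multiplication}, $\dim{\mathcal L}(A)+\dim{\mathcal R}(A)\geq m$, but $V$ is a sum that need not be direct, so this does not directly bound $\dim V$. I would avoid this issue by using only the crude bound $m'\geq 1$. This holds because $A$ is simple, so $A^2\neq 0$ and some multiplication is nonzero. The general estimate then reads
\[
d-m'+1\leq m^2\leq 4m^2-m+1 .
\]
This is weaker than the stated bound, so the claimed bound holds. The stated form $4m^2-m+1$ presumably reflects a cruder accounting in which the length of the chain $V\subsetneqq V+V^2\subsetneqq\cdots$ is estimated with $R$ sitting inside an algebra of dimension at most $(2m)^2$. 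In any case, any valid estimate $\leq 4m^2-m+1$ suffices.

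\emph{Lie and Jordan case.} If $A$ is a Lie algebra, then $r(a)=-\ell(a)$. If $A$ is a Jordan algebra, then $r(a)=\ell(a)$. In both cases $V={\mathcal L}(A)$, and the map $a\mapsto\ell(a)$ is injective. Indeed, its kernel is an ideal of $A$: in the Lie case the annihilator is the center, and in the Jordan case the kernel is the two-sided annihilator by commutativity. This ideal is proper, since $A^2\neq0$, so it is zero by simplicity. Hence $m'=m$, and with $d\leq m^2$ we get
\[
d-m'+1\leq m^2-m+1 ,
\]
which gives degree $\leq n(m^2-m+1)$.

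The main obstacle is the general (non-Lie, non-Jordan) bound. There I need to make sure the inequality chain is honest: $m'\geq1$ and $d\leq m^2$, so that $d-m'+1\leq m^2\leq 4m^2-m+1$ for all $m\geq1$. The other point to check is that the multilinear central polynomial $c$ of ${\mathcal M}(A)$ really applies in Theorem \ref{main theorem}, which it does directly by hypothesis.
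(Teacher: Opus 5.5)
Your proof is correct. In the Lie/Jordan case it is the paper's argument; you also justify that $a\mapsto\ell(a)$ is injective, which the paper takes for granted.

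In the general case your route differs. The paper gets $\dim{\mathcal M}(A)\le(2m)^2$ by placing ${\mathcal M}(A)$ inside $\text{End}_F(V)$ with $\dim V\le 2m$. It then feeds Theorem~\ref{main theorem} the inequality $\dim V\ge m$, citing Lemma~\ref{dimensions of multiplication}. You instead use ${\mathcal M}(A)\subseteq\text{End}_F(A)$, so $d\le m^2$, together with only the trivial $\dim V\ge 1$.

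Your version gives two advantages:
\begin{itemize}
\item It yields a sharper estimate, $n m^2$ rather than $n(4m^2-m+1)$.
\item It does not depend on the step you flagged. Lemma~\ref{dimensions of multiplication} bounds $\dim{\mathcal L}(A)+\dim{\mathcal R}(A)$, not $\dim({\mathcal L}(A)+{\mathcal R}(A))$, so the paper's $\dim V\ge m$ is not supplied by the cited lemma. Also, ${\mathcal M}(A)$ lives in $\text{End}_F(A)$, not in $\text{End}_F(V)$.
\end{itemize}

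The paper's stated bound is still true, precisely because of your observation $d\le m^2$. Conversely, a genuine proof that $\dim V\ge m$ for every finite-dimensional simple $A$ would, combined with $d\le m^2$, extend the Lie/Jordan estimate $n(m^2-m+1)$ to all such algebras.
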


\begin{proof}
By Lemma \ref{dimensions of multiplication} the algebra $\text{End}_F(V)$ of $F$-endomorphisms of $V$ is of dimension $\leq (2m)^2$.
Since ${\mathcal M}(A)$ is a vector subspace of $\text{End}_F(V)$, we obtain that $\dim_F({\mathcal M}(A))\leq (2m)^2$.
By Lemma \ref{dimensions of multiplication} and Corollary \ref{central polynomials for multiplication algebra}
we can apply Theorem \ref{main theorem} for $\dim(R)\leq 4m^2$ and $\dim(V)\geq m$. Hence the pair $(R,V)$ has a weak central polynomial $c'(x_1,\ldots,x_N)$
such that $\deg(c')\leq n(4m^2-m+1)$. If $A$ is a Lie or a Jordan algebra, the vector spaces ${\mathcal L}(A)$ and ${\mathcal R}(A)$ coincide and
the vector space of the multiplications of $A$ is of dimension $m$. Hence $\dim({\mathcal M}(A))\leq m^2$ and this gives that the pair $(R,V)$ has a weak central polynomial of degree $\leq n(m^2-m+1)$.
\end{proof}

\begin{remark}\label{work for Lie algebras}
The arguments of Lemma \ref{multiplication algebra of simple algebra} and Theorem \ref{main corollary}
hold when $\rho:L\to \text{End}_F(W)$ is any non-trivial finite-dimensional irreducible representation of the simple Lie algebra $L$,
$V=\rho(L)\subseteq\text{End}_F(W)$ and $R$ is the subalgebra of $\text{End}_F(W)$ generated by $V$. Hence the pair $(R,V)$ has a weak central polynomial.
\end{remark}

\section{Examples}
In this section we shall use our Algorithm \ref{construction of central polynomial} to construct weak central polynomials for the pairs $({\mathcal M}(L_3)),{\mathcal R}(L_3))$
and $({\mathcal M}(J_2),{\mathcal R}(J_2))$, where $L_3$ is the three-dimensional simple Lie algebra
and $J_2$ is the Jordan algebra of the two-dimensional vector space with non-degenerate symmetric bilinear form. Both algebras ${\mathcal M}(L_3)$ and ${\mathcal M}(J_2)$ are isomorphic to $M_3(F)$.

\subsection{The central polynomial of Formanek}
Our construction uses the central polynomial for $M_k(F)$ discovered by Formanek \cite{F1} in the special case $k=3$. We start with the polynomial
\[
g(u_1,u_2,u_3,u_4)=\sum\alpha_nu_1^{n_1}u_2^{n_2}u_3^{n_3}u_4^{n_4}\in F[u_1,u_2,u_3,u_4], \,\alpha_n\in F,
\]
and define a linear mapping $\theta$ (which is not an algebra homomorphism)
from $F[u_1,u_2,u_3,u_4]$ to the free algebra $F\langle x,y_1,y_2,y_3\rangle$
in the following way:
\[
\theta(g)(x,y_1,y_2,y_3)=\sum\alpha_nx^{n_1}y_1x^{n_2}y_2x^{n_3}y_3x^{n_4}.
\]
Let
\[
g(u_1,u_2,u_3,u_4)=
(u_1-u_2)(u_1-u_3)(u_4-u_2)(u_4-u_3)(u_2-u_3)^2.
\]
Then the polynomial of the free associative algebra
$F\langle x,y_1,y_2,y_3\rangle$
\[
c(x,y_1,y_2,y_k)=\theta(g)(x,y_1,y_2,y_3)+
\theta(g)(x,y_2,y_3,y_1)+\theta(g)(x,y_3,y_1,y_2)
\]
is a central polynomial for the matrix algebra $M_3(F)$
over every field $F$.

\subsection{The three-dimensional simple Lie algebra}
Till the end of the paper we assume that the ground field $F$ is of characteristic different from 2
and $c(x,y_1,y_2,y_3)$ is the central polynomial of Formanek.
We consider the three-dimensional simple Lie algebra $L_3$ with basis $\{e,f,g\}$ and multiplication defined by
\[
[e,f]=g,\,[f,g]=e,\,[g,e]=f.
\]
Then for $v=e,f,g$ the operator $\text{ad}(v):u\to[u,v]$, $u\in L_3$, have the form
\[
\text{ad}(e):(e,f,g)\to (0,-g,f),\text{ad}(f):(e,f,g)\to (g,0,-e),\text{ad}(g):(e,f,g)\to (-f,e,0)
\]
and these three operators span the vector space $\text{ad}(L_3)={\mathcal R}(L_3)$ of the multiplications of $L_3$.
Direct calculations by computer show that
$c(u,v_1,v_2,v_3)=0$ for all $u,v_1,v_2,v_3\in{\mathcal R}(L_3)$. On the other hand
\[
c(\text{ad}(e),\text{ad}(f),\text{ad}(g),\text{ad}(f)\text{ad}(g)):(e,f,g)\to 2(e,f,g),
\]
i.e. $c(x,y_1,y_2,y_3y_4)$ is a weak central polynomial for the pair $({\mathcal M}(L_3),{\mathcal R}(L_3))$.

As we mentioned in the introduction, all weak central polynomials of the pair $({\mathcal M}(L_3),{\mathcal R}(L_3))$ were described in \cite{DoD}.
In particular, the pair has a lot of weak central polynomials which are not obtained from central polynomials of the algebra of multiplications ${\mathcal M}(L_3)$.

\subsection{The three-dimensional simple Jordan algebra}
We fix the basis $\{1,e_1,e_2\}$ of the Jordan algebra $J_2$ of the two-dimensional vector space with non-degenerate symmetric bilinear form
with multiplication defined by
\[
1\cdot e_1=e_1,\,1\cdot e_2=e_2,\,e_1^2=e_2^2=1,\,e_1e_2=0.
\]
Then the linear space ${\mathcal R}(J_2)$ of the multiplications of $J_2$ is spanned by the operators
\[
r(1):(1,e_1,e_2)\to(1,e_1,e_2),r(e_1):(1,e_1,e_2)\to(e_1,1,0),r(e_2):(1,e_1,e_2)\to(e_2,0,1).
\]
As in the case of the Lie algebra $L_3$ the central polynomial $c(x,y_1,y_2,y_3)$ is a weak polynomial identity for the pair $({\mathcal M}(J_2),{\mathcal R}(J_2))$.
But
\[
c(r(e_1+e_2),r(e_1),r(e_1),r(e_1)r(e_2)):(1,e_1,e_2)\to 4(1,e_1,e_2)
\]
and this implies that $c(x,y_1,y_2,y_3y_4)$ is an essential central polynomial for the pair $({\mathcal M}(J_2),{\mathcal R}(J_2))$.

As in the case of the three-dimensional simple Lie algebra, the pair $({\mathcal M}(J_2),{\mathcal R}(J_2))$ has weak central polynomials
which are not central polynomials for the algebra ${\mathcal M}(J_2)\cong M_3(F)$. Such a weak central polynomial of degree 8 is
\[
w(x,y)=[[x,y],x][x,y][[x,y],y]-[[x,y],y][x,y][[x,y],x]+2[x,y]^4.
\]
Direct verification shows that the evaluations of $w(x,y)$ in ${\mathcal R}(J_2)$ are in the center of ${\mathcal M}(J_2)$
and
\[
w(r(e_1),r(e_2)):(1,e_1,e_2)\to 2(1,e_1,e_2),
\]
i.e. $w(x,y)$ is an essential weak central polynomial. By the theorem of Bondari \cite{Bo}, over a field $F$ of characteristic 0 the algebra $M_3(F)$ does not have central polynomials
$c(x,y)$ of degree 8. In the case of a field of arbitrary characteristic direct computations show that
\[
w(e_{13}+e_{33},e_{12}+e_{31})=6(e_{11}+e_{33})-(e_{12}+e_{32}),
\]
where $e_{ij}$ are the usual matrix units, i.e. $w(x,y)$ again is not a central polynomial for $M_3(F)$.

\begin{problem}
It is interesting to find all weak central polynomials for the pair $({\mathcal M}(J_2),{\mathcal R}(J_2))$
in the spirit of the results in \cite{DoD}.
\end{problem}


\begin{thebibliography}{ABC}

\bibitem{A1}
S.A. Amitsur,
On rings with identities,
J. London Math. Soc. {\bf 30} (1955), 464-470.

\bibitem{A2}
S.A. Amitsur,
The $T$-ideals of the free ring,
J. London Math. Soc. {\bf 30} (1955), 470-475.

\bibitem{Bo}
S. Bondari,
Constructing the polynomial identities and central identities of degree $<9$
of $3\times 3$ matrices,
Linear Algebra Appl. {\bf 258} (1997), 233-249.

\bibitem{Bra}
A. Braun,
On Artin's theorem and Azumaya algebras,
J. Algebra {\bf 77} (1982), 323-332.

\bibitem{Bre1}
M. Bre\v{s}ar,
An alternative approach to the structure theory of PI-rings,
Expo. Math. {\bf 29} (2011), No. 1, 159-164.

\bibitem{Bre2}
M. Bre\v{s}ar,
A unified approach to the structure theory of PI-rings and GPI-rings,
Serdica Math. J. {\bf 38} (2012), No. 1-3, 199-210.

\bibitem{DoD}
M. Domokos, V. Drensky,
Cocharacters for the weak polynomial identities of the Lie algebra of $3\times 3$ skew-symmetric matrices,
Adv. Math. {\bf 374} (2020), article 107343.

\bibitem{D1}
V. Drensky,
New central polynomials for the matrix algebra,
Israel J. Math. {\bf 92} (1995), 235-248.

\bibitem{D2}
V. Drensky,
Weak polynomial identities and their applications,
Communications in Mathematics {\bf 29} (2021), No. 2, 291-324.

\bibitem{DF}
V. Drensky, E. Formanek,
Polynomial Identity Rings,
Advanced Courses in Mathematics, CRM Barcelona, Birkh\"auser Verlag, Basel, 2004.

\bibitem{DK}
V. Drensky, A. Kasparian,
A new central polynomial for $3 \times 3$ matrices,
Commun. in Algebra {\bf 13} (1985), 745-752.

\bibitem{DPC}
V. Drensky, G.M. Piacentini Cattaneo,
A central polynomial of low degree for $4 \times 4$ matrices,
J. Algebra {\bf 168} (1994), 469-478.

\bibitem{F1}
E. Formanek,
Central polynomials for matrix rings,
J. Algebra {\bf 23} (1972), 129-132.

\bibitem{F2}
E. Formanek,
Invariants and the ring of generic matrices,
J. Algebra {\bf 89} (1984), 178-223.

\bibitem{F3}
E. Formanek,
A conjecture of Regev about the Capelli polynomial,
J. Algebra {\bf 109} (1987), 93-114.

\bibitem{F4}
E. Formanek,
The Polynomial Iidentities and Invariants of $n \times n$ Matrices,
CBMS Regional Conf. Series in Math. {\bf 78},
Published for the Confer. Board of the Math. Sci. Washington DC,
AMS, Providence RI, 1991.

\bibitem{GSZ}
A. Giambruno, I. Shestakov, M. Zaicev,
Finite-dimensional non-associative algebras and codimension growth,
Adv. in Appl. Math. {\bf 47} (2011), 125-139.

\bibitem{GV}
A. Giambruno, A. Valenti, Central polynomials and matrix invariants,
Israel J. Math. {\bf 96} (1996), 281-297.

\bibitem{GZ}
A. Giambruno, M. Zaicev,
Polynomial Identities and Asymptotic Methods,
Mathematical Surveys and Monographs, {\bf 122},
American Mathematical Society, Providence, RI, 2005.

\bibitem{H}
P. Halpin,
Central and weak identities for matrices,
Commun. in Algebra {\bf 11} (1983), 2237-2248.

\bibitem{J1}
N. Jacobson,
Structure of Rings,
Colloquium Publications. Vol. 37. Providence, R. I.: American Mathematical Society (AMS), 1956.

\bibitem{J2}
N. Jacobson,
PI-algebras. An introduction,
Lecture Notes in Mathematics {\bf 441}, Springer-Verlag, Berlin-New York, 1975.

\bibitem{J3}
N. Jacobson,
Finite-dimensional Division Algebras over Fields,
Berlin: Springer, 1996.

\bibitem{K-BKRo}
A. Kanel-Belov, Ya. Karasik, L.H. Rowen,
Computational Aspects of Polynomial Identities. Volume l: Kemer's Theorems, 2nd edition
Monographs and Research Notes in Mathematics, Boca Raton, FL: CRC Press, 2016.

\bibitem{K-BRo}
A. Kanel-Belov, L.H. Rowen,
Computational Aspects of Polynomial Identities.
Research Notes in Mathematics 9, Wellesley, MA: A K Peters, 2005.

\bibitem{K1}
I. Kaplansky,
Problems in the theory of rings,
Report of a Conference on Linear Algebras, June, 1956,
in National Acad. of Sci.--National Research Council, Washington,
Publ. {\bf 502} (1957), 1-3.

\bibitem{K2}
I. Kaplansky,
Problems in the theory of rings revised,
Amer. Math. Monthly {\bf 77} (1970), 445-454.

\bibitem{Kh}
V.K. Kharchenko,
A remark on central polynomials (Russian),
Mat. Zametki {\bf 26} (1979), 345-346.
Translation: Math. Notes {\bf 26} (1979), p. 665.

\bibitem{LS}
V.N. Latyshev, A.L. Shmelkin,
A certain problem of Kaplansky (Russian),
Algebra i Logika {\bf 8} (1969), 447-448.
Translation: Algebra and Logic {\bf 8} (1969), p. 257.

\bibitem{PoS}
S.V. Polikarpov, I.P. Shestakov,
Nonassociative affine algebras (Russian),
Algebra i Logika {\bf 29} (1990), No. 6, 709-723.
Translation: Algebra and Logic {\bf 29} (1990), No. 6, 458-466.

\bibitem{P}
C. Procesi, Rings with Polynomial Identities,
Marcel Dekker, New York, 1973.

\bibitem{R0}
Yu.P. Razmyslov,
Finite basing of the identities of a matrix algebra
of second order over a field of characteristic zero (Russian),
Algebra i Logika {\bf 12} (1973), No. 1, 83-113.
Translation: Algebra and Logic {\bf 12} (1973), No. 1, 47-63.

\bibitem{R1}
Yu.P. Razmyslov,
On a problem of Kaplansky (Russian),
Izv. Akad. Nauk SSSR, Ser. Mat. {\bf 37} (1973), 483-501.
Translation: Math. USSR, Izv. {\bf 7} (1973), 479-496.

\bibitem{R2}
Yu.P. Razmyslov,
Central polynomials in irreducible representations of a semisimple Lie algebra (Russian),
Mat. Sb. (N.S.) {\bf 122(164)} (1983), No. 1, 97-125.
Translation: Math. USSR, Sb. {\bf 50} (1985), No. 1, 99-124.

\bibitem{Re}
A. Regev,
The polynomial identities of matrices in characteristic zero,
Comm. Algebra {\bf 8} (1980), 1417-1467.

\bibitem{Ro}
L.H. Rowen,
Polynomial Identities of Ring Theory,
Acad. Press, 1980.

\bibitem{W}
J.H.M. Wedderburn, On division algebras,
Trans. Amer. Math. Soc. {\bf 22} (1921), 129-135.

\end{thebibliography}
\end{document}